\let\oldlb\(
\let\oldrb\)
\renewcommand{\(}{\begin{dmath*}}
\renewcommand{\)}{\end{dmath*}}
\newtheorem{theorem}{Theorem}[section]
\newtheorem{lemma}[theorem]{Lemma}
\newtheorem{proposition}[theorem]{Proposition}
\newtheorem{definition}[theorem]{Definition}
\newtheorem{question}[theorem]{Question}
\newtheorem{remark}[theorem]{Remark}
\theoremstyle{definition}
\newcommand{\N}{\mathbb{N}}
\newcommand{\Z}{\mathbb{Z}}
\newcommand{\Le}{\mathcal{L}}
\newcommand{\Lep}{\mathscr{L}}
\newcommand{\A}{\mathcal{A}}
\newcommand{\B}{\mathcal{B}}
\title{The strong topological Rokhlin property and Medvedev degrees of SFTs}
\author{Nicanor Carrasco-Vargas}
\address{Faculty of Mathematics and Computer Science, Jagiellonian University, Kraków, Poland}
\email{nicanor.vargas@uj.edu.pl}
\subjclass[2020]{37B10 (Primary), 37A55, 37B05, 03D30  (Secondary)}
\begin{document}
\maketitle
\begin{abstract}
We prove that if a recursively presented group  admits a (nonempty) subshift of finite type with nonzero Medvedev degree then it fails to have the strong topological Rokhlin property. This result simplifies a known criterion and provides new examples of recursively presented groups without this property. 
\end{abstract}
\section{Introduction}
A countable group is said to have the strong topological Rokhlin property (STRP) when it admits a generic continuous action on the Cantor space. This means that  the space of its continuous actions on the Cantor space endowed with the uniform topology has an element  whose topological conjugacy class contains a dense $G_\delta$ set. 

This work concerns the open problem of characterizing countable groups having the STRP, and a sufficient condition for the failure of the STRP. Let us now mention the key works in this context. In the seminal work    \cite{kechris_turbulence_2007} Kechris and Rosendal established the STRP for $\Z$. Hochman \cite{hochman_rohlin_2012} proved that $\Z^d$ fails to have the STRP for $d\geq 2$. Doucha \cite{doucha_strong_2024} generalized these results by showing that free products of finite/cyclic groups have the STRP, and that virtually nilpotent groups that are not virtually $\Z$ fail to have the STRP (also other classes of groups). 

A key observation in  
Hochman's work is that in order to establish the failure of the STRP for $\mathbb{Z}^d$ ($d\geq 2$), it is sufficient to construct a subshift of finite type (SFT) having a sofic factor with the following properties: it has nonzero Medvedev  degree, it equals the union of its minimal subsystems, and these minimal subsystems are not isolated, see \cite[\S 5.1]{hochman_rohlin_2012}.  A related criterion can be found in Doucha's paper  \cite[\S 5.1]{doucha_strong_2024}.

The Medvedev degree of a set quantifies how hard it is to compute at least one of its elements in a recursion-theoretic sense. The underlying philosophy is that one identifies a mathematical problem with the set of its solutions, so the difficulty of the problem corresponds to how hard it is computing at least one solution. In the context of symbolic dynamics and subshifts over finitely generated groups,  Medvedev degrees are a particularly robust notion. Morphisms of subshifts are automatically computable by the Curtis-Lyndon-Hedlund theorem, and from this it follows that Medvedev degrees are invariant for topological conjugacy and non-increasing by factor maps, without requiring the group to be amenable, see \cite{simpson_medvedev_2014,barbieri_medvedev_2024}.

Our main result is the following sufficient condition for the failure of the STRP on a recursively presented group. 
\begin{theorem}\label{main}
    A recursively presented group which admits a (nonempty) SFT with nonzero Medvedev degree  fails to have the STRP.
\end{theorem}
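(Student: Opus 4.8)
The plan is to apply the sufficient criterion recalled above from Hochman and Doucha: it is enough to exhibit, over the recursively presented group $G$, an SFT $\widetilde X$ together with a sofic factor $Y=\pi(\widetilde X)$ such that (a) $Y$ has nonzero Medvedev degree, (b) $Y$ equals the union of its minimal subsystems, and (c) these minimal subsystems are not isolated. So, starting from a given nonempty SFT $X$ of nonzero Medvedev degree, I would manufacture such a $Y$ by encoding the configurations of $X$ into a uniformly parametrized family of minimal subshifts. Before that, I would record two consequences of the hypothesis: since $X$ has nonzero Medvedev degree it contains no computable configuration, and since an SFT over a recursively presented group is effectively closed, an isolated point would be the unique configuration extending one of its patterns, hence computable; thus $X$ has no isolated points and is perfect. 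This perfectness will furnish condition (c).

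The core of the argument is a hierarchical ("Toeplitz-type") encoding. I would fix a single \emph{uniform} skeleton, a $G$-indexed hierarchy of marker positions imposed by local rules, and define for each $x\in X$ a configuration $T(x)$ obtained by writing the symbols of $x$ along this skeleton and filling the remaining positions in an almost periodic way. Because the skeleton, and hence the recurrence of patterns, is the same for every $x$, each $T(x)$ is uniformly recurrent with a recurrence rate independent of $x$; consequently its orbit closure $M_x=\overline{T(x)\cdot G}$ is minimal, and the closed invariant set $Y=\overline{\{T(x):x\in X\}\cdot G}$ is pointwise uniformly recurrent, i.e.\ equal to the union of the minimal subsystems $M_x$. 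This gives (b). The reconstruction map sending $y\in Y$ to the encoded point is continuous, shift-equivariant and injective on orbits, so $x\mapsto M_x$ is essentially injective; since $X$ is perfect there are points $x'\to x$ outside the orbit of $x$, whence $M_{x'}\neq M_x$ but $M_{x'}\to M_x$, so no $M_x$ is isolated, giving (c).

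For (a) I would use that the skeleton is rigid enough that any $y\in Y$ lets one read off, at computable marker positions, a shift of its underlying point $x\in X$; thus $y$ Turing-computes a configuration of $X$, and a computable $y$ would yield a computable point of $X$, contradicting the hypothesis. Hence $Y$ has nonzero Medvedev degree. Finally, since the skeleton is SFT-enforceable and the content layer is a copy of the SFT $X$ placed along it by local rules, the combined system $\widetilde X$ is an SFT and $Y$ is its sofic image under the surface map $\pi$; the criterion then yields the failure of the STRP.

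I expect the main obstacle to be realizing this encoding \emph{soficly over an arbitrary recursively presented} $G$: producing a uniform, SFT-enforceable skeleton that simultaneously (i) makes every $T(x)$ uniformly recurrent with a common rate, (ii) keeps the reconstruction $y\mapsto x$ computable and shift-equivariant, and (iii) introduces no spurious limit points of $Y$ lying outside all the $M_x$, which would break (b). The third point is controlled precisely by the uniformity of the recurrence rate across the family, since uniform recurrence with a fixed constant is a closed condition; the genuinely delicate part is transferring the $\mathbb{Z}^d$-style hierarchical machinery to the generality of a recursively presented group, where one must leverage that the mere existence of an SFT of nonzero Medvedev degree already forces $G$ to be computationally rich, while the recursive presentation of $G$ is what guarantees the marker and reconstruction rules are themselves effective.
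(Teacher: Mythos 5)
There is a genuine gap: your proof is a program whose two pillars are both unavailable at the stated level of generality. First, the Hochman criterion you invoke --- an SFT with a sofic factor of nonzero Medvedev degree that is a union of non-isolated minimal subsystems implies failure of the STRP --- is established in \cite{hochman_rohlin_2012} only for $\mathbb{Z}^d$, and the related criterion in \cite[\S 5.1]{doucha_strong_2024} is not this statement; you give no argument that conditions (a)--(c) imply failure of the STRP over an arbitrary recursively presented group, so the reduction itself needs proof. Second, and more fatally, the ``uniform SFT-enforceable skeleton'' carrying a hierarchical Toeplitz-type encoding is precisely the hard content you defer: Hochman's marker hierarchy exploits the lattice structure of $\mathbb{Z}^d$, and no analogue is known for the groups this theorem is meant to cover --- lamplighter groups, Baumslag--Solitar groups, or a direct product of two arbitrary infinite finitely generated groups, where an SFT of nonzero Medvedev degree exists \cite{barbieri_medvedev_2024} but no simulation or marker machinery does. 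The hypothesis that $G$ admits one SFT of nonzero degree does not ``force $G$ to be computationally rich'' in any way that yields local rules enforcing a hierarchy; your closing paragraph correctly identifies this obstacle, but it is the theorem, not a technicality. (Your preliminary observation is fine but immaterial: an isolated point of an effectively closed set over a recursively presented group is indeed computable, so $X$ is perfect; this does not help build the skeleton.)

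The paper's actual proof avoids any construction. By \cite[Theorem A]{doucha_strong_2024}, $G$ has the STRP if and only if projectively isolated subshifts are dense in $S(\A^G)$ for every finite alphabet $\A$. The paper then proves a purely computability-theoretic fact: a projectively isolated subshift $X$ on a recursively presented group has decidable language --- the complement of the language is recursively enumerable because $X$ is sofic, and the language itself is enumerated by checking whether forbidding a pattern $p$ strictly shrinks $\Le_{B_k}(\cdot)$ of the preimage $Y_p$ inside a nearby SFT $Y$ in the defining open set $U$ --- hence $X$ has zero Medvedev degree. Since Medvedev degrees are non-decreasing under inclusion and $S(X)=\{Y\in S(\A^G) : Y\subset X\}$ is \emph{open} when $X$ is an SFT, an SFT of nonzero degree has an open neighborhood in which every subshift has nonzero degree, so no projectively isolated subshift lies there, contradicting density. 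In short: monotonicity of Medvedev degrees under inclusion, plus the openness of $S(X)$, replaces the entire dynamical construction your route requires; this is exactly why the paper's criterion is checkable for the new classes of groups while the Hochman-style route is not.
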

This condition is much easier to verify than the one established in Hochman's work \cite{hochman_rohlin_2012}. Indeed, in the case of $\mathbb{Z}^d$ ($d\geq 2$) the existence of SFTs with nonzero Medvedev degree is known since the 1970s \cite{hanf_nonrecursive_1974,myers_nonrecursive_1974}.

As a consequence of Theorem \ref{main} and existing results about Medvedev degrees of SFTs, we obtain several new examples of recursively presented groups not having the STRP. This includes Baumslag-Solitar groups \cite{aubrun_tiling_2013}, Lamplighter groups \cite{bartholdi_shifts_2024}, groups quasi-isometric to the hyperbolic plane \cite{barbieri_medvedev_2024}, branch groups \cite{barbieri_medvedev_2024}, direct products of two infinite groups \cite[Theorem 5.8]{barbieri_medvedev_2024} (this was previously known for direct products of three infinite groups \cite[Theorem C]{doucha_strong_2024}), and groups admitting a simulation theorem  \cite[\S 5.5]{barbieri_medvedev_2024}.

The proof of Theorem \ref{main} strongly relies on Doucha's result   \cite[Theorem A]{doucha_strong_2024}, which shows that a  finitely generated group has the STRP if and only if projectively isolated subshifts are dense in the space of subshifts on alphabet $\A$ for each finite alphabet $\A$. Projectively isolated subshifts are a class of sofic subshifts introduced in \cite{doucha_strong_2024}, see Definition \ref{def:projectively-isolated}. We prove that if an SFT has nonzero Medvedev degree, then it has a neighborhood in this topological space without projectively isolated subshifts. The technical implementation of our argument  relies on some results from \cite{barbieri_effective_2025}. 
\section{Preliminaries}
We write $A\Subset B$ when $A$ is finite and $A\subset B$, and we denote by $A^B$ the set of functions from $B$ to $A$. 
\subsection{Finitely generated groups}
Let $G$ be a finitely generated infinite group with identity element $1_G$, and let $S$ be a finite generating set closed under inverses. The set of words on $S$ is denoted by $S^\ast$. Given $w\in S^\ast$ we denote by $\underline{w}$ the corresponding group element. Let $F(S)$ be the free group generated by $S$, and let $\pi\colon F(S)\to G$ be the group homomorphism which sends a reduced word $w$ to $\underline w$.

Let $W_n\Subset S^\ast$ be the set of words of length at most $n$, and let $B_n\Subset G$ be the set $\{\underline{w} : w\in W_n\}$, $n\geq 1$. We write $gF=\{gf : f\in F\}$ for $g\in G$ and $F\Subset G$.

In most of our statements we will assume that $G$ is recursively presented, meaning that the word problem $\{w\in S^\ast : \underline{w}=1_G\}$ is recursively enumerable. We also recall that $G$ has decidable word problem when $\{w\in S^\ast : \underline{w}=1_G\}$ is decidable.  
\subsection{Subshifts}
We follow \cite{ceccherini-silberstein_cellular_2018}. Let $\A$ be a finite set called alphabet, and endow $\A^G$ with the metric
\[d(x,y)=\inf\{2^{-n} : x|_{B_n}=y|_{B_n}\}, \ \ \ x,y\in \A^G.\]
Elements in $\A^G$ are called configurations. We consider the shift action $G\curvearrowright \A^G$ given by $gx(h)=x(g^{-1}h)$. A subshift is a nonempty closed and shift-invariant subset of $\A^G$. 

Given two subshifts $X\subset \A^G$ and $Y\subset \B^G$, a function $\phi\colon Y\to X$ is called a morphism if it is continuous and $g\phi(x)=\phi(gx)$ for all $g\in G$ and $x\in Y$. The Curtis-Lyndon-Hedlund theorem states that this is equivalent to the existence of a finite set $F\Subset G$ and a local function $\ell \colon \B^F\to \A$ such that $\phi(x)(g)=\ell((g^{-1}x)|_F)$ for every $x\in X$ and $g\in G$. If $\phi$ is surjective then we say that it is a factor map and that $X$ is a factor of $Y$.

A pattern with support $F\Subset G$ is a function $p\colon F\to \A$. We write $p\sqsubset x$ for $x\in \A^G$ if $x|_F=p$.  For a subshift $X$ we write $\Le_F(X)=\{p\in \A^F : p\sqsubset x \text{ for some }x\in X\}$. The language of $X$ is $\Le(X)=\bigcup_{F\Subset G}\Le_F(X)$. We also write $\Le^c(X)=\Le(\A^G)\smallsetminus \Le(X)$.

We say that a pattern $p$ appears in a configuration $x\in\A^G$ when $p\sqsubset gx$ for some $g\in G$. A subshift can be defined by forbidding a set of patterns  $\mathcal F\subset \Le(\A^G)$ as  
 \[X_{\mathcal F}=\{x\in \A^G : \text{ no pattern from $\mathcal F$ appears in $x$}\}\]

A subshift of finite type (SFT) is one which can be defined by a finite set of forbidden patterns, and a subshift is sofic when it  is the factor of an SFT.

\subsection{The metric space of subshifts}
We consider the space of subshifts
\[S(\A^G)=\{X\subset \A^G : \text{$X$ subshift }\}\] endowed with the metric 
\[D(X,Y)=\inf\{2^{-n} : \Le_{B_n}(X)=\Le_{B_n}(Y)\}, \ \  \ X,Y\in S(\A^G).\]
One easily verifies that $D$ is the Hausdorff distance associated to $d$.

\begin{remark}
SFTs are dense in $(S(\A^G),D)$.
\end{remark}
\begin{definition}[\cite{doucha_strong_2024}]\label{def:projectively-isolated}
A subshift $X\in S(\A^G)$ is projectively isolated if there is an alphabet $\B$, an open set $U$ in $S(\B^G)$, and a morphism of subshifts $\phi\colon \B^G\to \A^G$, so that $\phi(Y)=X$ for every $Y\in U$. 
\end{definition}
\begin{remark}
    A projectively isolated subshift is sofic. Indeed, since SFTs are dense in $S(\B^G)$, we can pick $Y\in U$ in the previous definition to be an SFT, and then the relation $\phi(Y)=X$ shows that $X$ is the factor of an SFT. 
\end{remark}
\subsection{Computability}
A set of integers $P\subset \N$ is recursively enumerable if there is a Turing machine which on input $n\in\N$ halts if and only if $n\in P$. This is equivalent to the existence of a Turing machine which outputs an infinite list whose range is $P$. We say that $P$ is co-recursively enumerable if $\N\smallsetminus P$ is recursively enumerable, and that $P$ is decidable if it satisfies both conditions. These notions extend to sets of words, sets of finite sets, sets of pattern presentations (defined below), and other subsets of countable sets through computable enumerations. 

In order to define computability notions for subshifts over  finitely generated groups without assuming them to have decidable word problem we introduce the following artifact. A pattern presentation with support $F\Subset S^\ast$ is a function $p\colon F\to \A$. We write $p\sqsubset x$ for a configuration  $x\in \A^G$ when $x(\underline{w})=p(w)$ for every $w\in F$, and we say that $p$ appears in $x$ whenever $p\sqsubset gx$ for some $g\in G$. We say that $p$  is consistent when $p(u)=p(v)$ for every $u,v\in F$ with $\underline{u}=\underline{v}$. Provided $p$ is consistent, we can define an actual pattern $q$ with support $\{\underline{w} : w\in F\}\Subset G$ by the relation $q(\underline{w})=p(w)$. In this situation we say that $p$ is a presentation of $q$.   
 
For a subshift $X$ and  $F\Subset S^\ast$ we write $\mathscr{L}_F(X)=\{ p \in \A^F : \  p\sqsubset x \text{ for some } x\in X\}$. We define $\Lep(X)=\bigcup_{F\Subset S^\ast}\Lep_F(X)$, and  $\Lep^c(X)=\Lep(\A^G)\smallsetminus \Lep(X)$.  Therefore $\Lep(X)$ is the set of presentations of patterns in the language $\Le(X)$ of $X$. 

We say that a subshift $X$  is effective when $\Lep^c(X)$ is recursively enumerable, and we say that the language $\Le(X)$ of $X$ is decidable when the corresponding set of presentations $\Lep(X)$ is decidable. Effective subshifts are closed by factor maps. If the group is recursively presented, then both SFTs and sofic subshifts are effective. 

We note that there are several definitions of effective subshifts in the literature, but they all coincide for recursively presented groups. The definition given here is the most restrictive one, and it equivalent to the requirement that the shift action $G\curvearrowright X$  is an effective dynamical system in the sense of being conjugate to an action by computable homeomorphisms on a recursively compact subset of $\{0,1\}^\N$. We refer the reader to  \cite[\S 7]{barbieri_effective_2025} for further details on effective subshifts. 

We also mention that in the case of a group with decidable word problem, we can computably identify $\Le(\A^G)$ with $\N$ and work directly with patterns instead of pattern presentations. In this case a subshift is effective when $\Le^c(X)$ is recursively enumerable, and has decidable language when $\Le(X)$ is decidable.


The zero Medvedev degree is denoted $0_{\mathfrak M}$, and the Medvedev degree of a subshift $X$ is denoted $m(X)$. Medvedev degrees are non-decreasing by inclusions, in the sense that $X\subset Y$ implies $m(X)\geq m(Y)$.  A subshift $X$ satisfies  $m(X)=0_{\mathfrak M}$ if and only if it has a computable configuration. By a computable configuration we mean an element $x\in \A^G$ for which there is a Turing machine which on input $w\in S^\ast$ halts and outputs $x(\underline{w})\in\A$.

Defining Medvedev requires introducing computable functions over $\{0,1\}^\N$,  so we omit the full definition and refer the reader to \cite[\S 3]{barbieri_medvedev_2024} for a more detailed exposition. We will only need the properties mentioned in the previous paragraph. 
\section{Proof of Theorem \ref{main}}
\begin{proposition}\label{prop:projectively-isolated-decidable-language}
A projectively isolated subshift on a recursively presented group has decidable language. 
\end{proposition}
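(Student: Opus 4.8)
The plan is to unwind the definition and reduce to a single effectivity statement. Since a projectively isolated $X$ is sofic and the group is recursively presented, $X$ is effective, so $\Lep^c(X)$ is recursively enumerable; decidability of $\Le(X)$ therefore amounts to showing that $\Lep(X)$ is also recursively enumerable. To exploit the projective isolation I would fix the witnessing data $(\B,U,\phi)$, use density of SFTs to pick an SFT $Z\in U$, and choose $N$ large enough that the ball $\{W\in S(\B^G):\Le_{B_N}(W)=\Le_{B_N}(Z)\}$ is contained in $U$. Taking $N$ at least the radius of a defining set of forbidden patterns for $Z$, every such $W$ is a subshift of $Z$ with full $B_N$-language, and by hypothesis $\phi(W)=X$ for each of them; in particular $\phi(Z)=X$.

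The structural heart of the argument is the following consequence of this \emph{covering property}. For any finite family of configurations $z_a\in Z$ (indexed by $a\in\Le_{B_N}(Z)$) with $z_a|_{B_N}=a$, the subshift $W_0=\bigcup_a \overline{Gz_a}$ is contained in $Z$ and has full $B_N$-language, hence $W_0\in U$ and $X=\phi(W_0)=\bigcup_a \overline{G\phi(z_a)}$. Consequently $\Le(X)=\bigcup_a\{\,p : p \text{ appears in }\phi(z_a)\,\}$, so $X$ is a finite union of orbit closures of single configurations and $\Le(X)$ is the union of finitely many languages of the simplest possible form. This reduces the enumeration of $\Le(X)$ to exhibiting, for each $a$, the patterns occurring in one configuration $\phi(z_a)$.

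The hard part will be turning this description into an effective enumeration, and this is where I expect the main obstacle and where the results of \cite{barbieri_effective_2025} enter. One cannot simply compute the points $z_a\in Z$: the SFT $Z$ may have nonzero Medvedev degree (a highly collapsing $\phi$ can still satisfy the covering property), so $Z$ need not contain any computable configuration, and membership in $\Le(X)$ is a priori only co-recursively enumerable. The genuine difficulty is the local-to-global admissibility gap: certifying with finite data that a candidate pattern really appears in $X$, rather than merely arising from a pattern that is locally consistent with the forbidden patterns of $Z$ but not globally extendable. I would resolve this using the recursive compactness of the effective SFT $Z$ together with the robustness supplied by the covering property, namely that \emph{every} full-$B_N$-language subsystem of $Z$ still surjects onto $X$; this robustness is what should allow an effective search for genuine witnesses and a finite certification of membership, yielding that $\Lep(X)$ is recursively enumerable. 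Combined with the recursive enumerability of $\Lep^c(X)$ from effectiveness, this makes $\Lep(X)$ decidable, which is the desired conclusion.
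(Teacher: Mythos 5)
Your reduction to showing that $\Lep(X)$ is recursively enumerable is correct, and so is your structural observation: any family $(z_a)_{a\in\Le_{B_N}(Z)}$ with $z_a\in Z$ and $z_a|_{B_N}=a$ generates a subshift $W_0=\bigcup_a\overline{Gz_a}\subset Z$ with full $B_N$-language, hence $W_0\in U$ and $X=\phi(W_0)=\bigcup_a\overline{G\phi(z_a)}$. This yields the genuinely useful equivalence: $p\in\Le(X)$ if and only if \emph{every} such family has some $a$ for which $p$ appears in $\phi(z_a)$. That is a different decomposition from the paper's. However, your proposal stops exactly where the proof has to happen: ``this robustness is what should allow an effective search for genuine witnesses and a finite certification of membership'' is a promise, not an argument. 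You never say what the finite certificate is or why searching for it terminates precisely when $p\in\Lep(X)$, so as written there is a genuine gap at the decisive step.

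The gap is closable with the tools you name, and it is worth seeing how, since it clarifies the comparison with the paper. Let $K=\{(z_a)_a : z_a\in Z,\ z_a|_{B_N}=a\}$ and, for given $p$, let $K_p=\{(z_a)\in K : \text{for every } a,\ p \text{ does not appear in } \phi(z_a)\}$; the condition defining $K_p$ is an intersection of clopen conditions, so $K_p$ is closed, and by your covering property $p\in\Le(X)$ iff $K_p=\emptyset$. Lifting to the free group $F(S)$ (to trade the merely recursively enumerable word problem for a decidable one, exactly as in the first paragraph of the paper's proof of Lemma~\ref{lemma}), $K$ becomes a recursively compact set, $K_p$ is effectively closed uniformly in $p$ because the pattern extension $\Phi$ of $\phi$ is computable, and emptiness of an effectively closed subset of a recursively compact set is semi-decidable; this completes your route. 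The paper certifies $p\in\Lep(X)$ differently: it defines $X_p=\{x\in X : p \text{ does not appear in } x\}$ and $Y_p=Y\cap\phi^{-1}(X_p)$, proves $p\in\Lep(X)$ iff $\Lep_{W_k}(Y_p)\subsetneq\Lep_{W_k}(Y)$, and detects the strict inclusion by enumerating $\Lep^c(Y_p)$ against the finitely many hard-coded patterns of $\Lep_{W_k}(Y)$, the uniform effectiveness of $Y_p$ being the content of Lemma~\ref{lemma}. Both arguments rest on the same computability core (free-group lift plus computability of $\Phi$); yours replaces the paper's ``language shrinks'' certificate by a compactness/emptiness certificate over a family space, which is a legitimate alternative of comparable difficulty once actually spelled out, whereas the paper's version avoids quantifying over tuples of configurations by pushing everything into the single subshift $Y_p$.
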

\begin{proof}
    Let $X\subset \A^G$ be a projectively isolated subshift. Our goal is showing that $\Lep(X)$ is decidable. Since a projectively isolated subshift is sofic and sofic subshifts are effective, it follows that $\Lep^c(X)$ is recursively enumerable. We now prove that $\Lep(X)$ is also recursively enumerable. 

    Fix a finite alphabet $\B$, an open set $U\subset S(\B^G)$, and a morphism $\phi\colon \B^G\to \A^G$ as in the definition of projectively isolated subshift associated to $X$. Since $U$ is open and SFTs are dense in $S(\B^G)$, we can pick an SFT $Y\in U$. Since $U$ is open, we can find a radius $\epsilon$ so that the ball in the metric $D$ with radius $\epsilon$ and center $Y$ is contained in $U$. Fix $k\in\N$ so that $2^{-k}<\epsilon$. It follows then from the definition of $D$ that every subshift $Z\in S(\B^G)$ with $\Le_{B_k}(Z)=\Le_{B_k}(Y)$ satisfies $Z\in U$. 

    In order to prove that $\Lep(X)$ is recursively enumerable, we consider a pattern presentation $p\colon F\to \A$ with support $F\Subset S^\ast$, and describe an algorithmic procedure that halts if and only if $p\in\Lep(X)$. For this we define two subshifts $X_p$ and $Y_p$ which depend on $p$.
    \[X_p=\{x\in X : \text{$p$ does not appear in $x$}\},\] 
    \[Y_p=\{y \in Y : \phi(y)\in X_p\}.\]
    
    Observe that $Y_p\subset Y$ and thus $\Lep_{W_k}(Y_p)\subset \Lep_{W_k}(Y)$. The number $k$ depends on $X$ and not on $p$, so we can ``hard-code'' it in the next algorithm, together with the finite set $\Lep_{W_k}(Y)$. 
    
    The key observation is that $p$ belongs to $\Lep(X)$ if and only if $\Lep_{W_k}(Y_p)$ is properly contained in $\Lep_{W_k}(Y)$. This is equivalent to the assertion ($p\not\in\Lep(X)$) $\iff$ $(\Lep_{W_k}(Y_p)=\Lep_{W_k}(Y))$. Let us prove this equivalence. 
    \begin{itemize}
        \item $(\Rightarrow)$. Suppose that $p\not\in\Lep(X)$. Then it follows from the definition of $X_p$ that $X_p=X$, which implies $Y_p=Y$, and therefore $\Lep_{W_k}(Y_p)=\Lep_{W_k}(Y)$. 
        \item $(\Leftarrow)$. Suppose that $\Lep_{W_k}(Y_p)=\Lep_{W_k}(Y)$. This assumption implies that we also have $\Le_{B_k}(Y_p)=\Le_{B_k}(Y)$.

        We first prove that $\phi(Y_p)=X$. Here we use that $X$ is projectively isolated, and our choice of $k$. The equality $\Le_{B_k}(Y_p)=\Le_{B_k}(Y)$ implies that $Y_p$ belongs to $U$. Then $\phi(Y_p)=X$ by definition of $U$. 
        
        Next we prove that $\phi(Y_p)=X_p$. Let $\psi\colon Y\to X$ be the restriction of $\phi$. Thus $\psi$ is a surjective function. It is clear that $\psi(Y_p)=\phi(Y_p)$, so it suffices to prove that $\psi(Y_p)=X_p$. We also have $\psi(\psi^{-1}(X_p))=X_p$ by an elementary property of surjective functions. But since $\psi^{-1}(X_p)=Y_p$, it follows that $\psi(Y_p)=X_p$ as claimed.   
        
        We proved that $\phi(Y_p)=X$ and also that $\phi(Y_p)=X_p$, so we can conclude that $X=X_p$. The last equality implies $p\not\in\Lep(X)$.
    \end{itemize} 
    We have proved our claim that $p\in\Lep(X)$ if and only if $\Lep_{W_k}(Y_p)\subsetneq\Lep_{W_k}(Y)$.
    
    In order to finish the proof, it suffices to show that the relation $\Lep_{W_k}(Y_p)\subsetneq\Lep_{W_k}(Y)$ is recursively enumerable. For this we will use the fact that $Y_p$ is effective uniformly on $p$, meaning that there is an algorithm which given $p\colon F\to \A$ ($F\Subset S^\ast$) and $q\colon E\to \B$ ($E\Subset S^\ast$) halts if and only if $q\in \Lep^c(Y_p)$. This will be proved separately in Lemma \ref{lemma} below. We will also use the fact that $\Lep_{W_k}(Y)$ is finite and does not depend on $p$. Let $\{q_1,\dots,q_m\}=\Lep_{W_k}(Y)$. In order to determine whether $\Lep_{W_k}(Y_p)\subsetneq\Lep_{W_k}(Y)$, it suffices to simultaneously run $m$ instances of the algorithm in the previous paragraph, one for each $q_i$, $i=1,\dots,m$. If some of these $m$ processes halts to conclude that $q_i\in\Lep^c_{W_k}(Y_p)$, then we stop the whole process to conclude $\Lep_{W_k}(Y_p)\subsetneq\Lep_{W_k}(Y)$ and consequently $p\in\Lep(X)$. \end{proof}
\begin{lemma}\label{lemma}
    Let $G$ be a finitely generated group and let $\phi\colon \B^G\to \A^G$ be a morphism. Let $X\subset\A^G$ and  $Y\subset \B^G$ be effective subshifts satisfying $X=\phi(Y)$. Let $p\colon F\to \A$ be a pattern presentation with support $F\Subset S^\ast$, and define
    \[X_p=\{x\in X : \text{$p$ does not appear in $x$}\},\] 
    \[Y_p=\{y \in Y : \phi(y)\in X_p\}.\]
    Then $\Lep^c(Y_p)$ is recursively enumerable uniformly on $p$. That is, $\{(q,p) : q\in \Lep^c(Y_p)\}$ is recursively enumerable.  
\end{lemma}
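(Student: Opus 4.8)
The plan is to give a semi-decision procedure for membership in $\Lep^c(Y_p)$ that is uniform in the pair $(q,p)$, and the first step is a compactness reduction. Unwinding the definitions, a presentation $q$ over $\B$ lies in $\Lep^c(Y_p)$ precisely when every $y\in Y$ with $q\sqsubset y$ has $p$ appearing in $\phi(y)$. Writing $K=\{y\in Y : q\sqsubset y\}$, which is compact, and $U_N=\{y\in \B^G : p\sqsubset g\phi(y)\text{ for some }g\in B_N\}$, which is open (indeed clopen, since through the fixed finite local rule of $\phi$ it depends on only finitely many coordinates of $y$), the condition ``$p$ appears in $\phi(y)$ for every $y\in K$'' says exactly that $K\subseteq\bigcup_{N}U_N$. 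As the $U_N$ increase, compactness of $K$ yields
\[
q\in\Lep^c(Y_p)\iff \exists N\ \big(Y\cap C_N=\emptyset\big),\qquad C_N:=\{y : q\sqsubset y\}\smallsetminus U_N,
\]
where each $C_N$ is a clopen subset of $\B^G$ determined by the finite data $q$, $p$, $N$ and the local rule of $\phi$. The degenerate cases are handled automatically: if $q\in\Lep^c(Y)$ then $K=\emptyset$ and every $N$ works, and if $Y_p=\emptyset$ then $p$ appears in $\phi(y)$ for all $y\in Y$, so some $N$ works simultaneously for every $q$.

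It then remains to show that, for fixed $N$, the relation $Y\cap C_N=\emptyset$ is recursively enumerable uniformly in $(q,p,N)$; dovetailing over $N$ will exhibit $\{(q,p):q\in\Lep^c(Y_p)\}$ as recursively enumerable. This is where effectiveness of $Y$ enters, and where the absence of a decidable word problem on $G$ makes a purely syntactic manipulation of pattern presentations delicate, since one cannot in general decide which words of $S^\ast$ represent the same element, and so cannot decide which presentations are consistent. To circumvent this I would pass to the effective-dynamical-systems description recalled in the preliminaries: since $\Lep^c(Y)$ is recursively enumerable, the results of \cite{barbieri_effective_2025} realize $G\curvearrowright Y$, through a computable conjugacy, as an action by computable homeomorphisms on a recursively compact subset of $\{0,1\}^\N$, while $\phi$ is computable by the Curtis-Lyndon-Hedlund theorem. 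Under this realization the coincidence structure of genuine $Y$-configurations is carried by the computable conjugacy, so each $C_N$ corresponds to a decidable clopen set, uniformly computable from $(q,p,N)$.

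Granting this, $Y\cap C_N=\emptyset$ is the assertion that a recursively compact set is covered by the decidable clopen complement of $C_N$, which is a semi-decidable property, uniformly in the data; running these procedures for all $N$ in parallel and halting as soon as one succeeds gives the required uniform enumeration, proving the lemma. I expect the main obstacle to be exactly this second step: turning ``$q\sqsubset y$'' and ``$p$ appears within radius $N$ in $\phi(y)$'' into \emph{uniformly decidable} clopen conditions on an effective copy of $Y$ without assuming a solvable word problem. All the genuinely dynamical content sits in the short compactness reduction of the first step, whereas the rest is careful bookkeeping with pattern presentations and the transfer to the recursively compact model, for which I would lean on the machinery of \cite{barbieri_effective_2025}.
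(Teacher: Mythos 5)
Your proposal is correct and takes essentially the same route as the paper: your compactness reduction over $G$ is the paper's equivalence between $q\in\Lep^c(Y_p)$ and the existence of a finite window on which every extension of $q$ is excluded by $\Le^c(Y)$ or by $\Le^c(X_p)$, and your transfer to a recursively compact model (together with the semi-decidability of covering a recursively compact set by a decidable clopen set) is precisely the paper's opening move plus its alternative effectively-closed-sets argument. The obstacle you flag is discharged there concretely: one pulls every subshift $Z$ back to $\hat Z\subset \B^{F(S)}$ via $\hat z(g)=z(\pi(g))$, which preserves $\Lep$ and, for the restrictive notion of effectiveness used here, effectiveness itself, reducing the whole statement to the free group, where the word problem is decidable and your conditions ``$q\sqsubset y$'' and ``$p$ appears within $B_N$ in $\phi(y)$'' become uniformly decidable clopen constraints exactly as you need.
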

\begin{proof}

Recall that $\pi\colon F(S)\to G$ is the group homomorphism which sends a reduced word $w$ to $\underline{w}$. Given $z\in \A^G$ we define $\hat z\in\A^{F(S)}$ by $\hat z(g)=z(\pi(g))$, $g\in F(S)$. Given a subshift $Z\subset \A^G$ we define $\hat{Z}=\{\hat z : z\in Z\}$. Observe that we have the set-theoretical equality $\Lep(Z)=\Lep(\hat Z)$. Furthermore, the association $Z\to\hat Z$ preserves\footnote{We remark that this is not true for arbitrary finitely generated groups if we take a less restrictive definition of effective subshift \cite[Proposition 7.1]{barbieri_effective_2025}, but with the definition used here, this is true in general.} the property of being an effective subshift  \cite[\S 3.3 and \S 7] {barbieri_effective_2025}. It follows from these facts that it suffices to prove the statement for a finitely generated free group. 

For the next argument we will only need to assume that our group $G$ has decidable word problem (free groups have decidable word problem). Thus we can work directly with patterns instead of pattern presentations.  We prove that there exists an algorithm which given $p\colon F\to \A$ ($F\Subset S^\ast$) and $q\colon E\to\B$ ($E\Subset G$), halts if and only if $q\in \Le^c(Y_p)$. 

We need to define translations of patterns. Given $g\in G$ the translation $gq$ of the pattern $q$ is defined as the pattern with support $gE$ and satisfying $gq(h)=q(g^{-1}h)$, $h\in gE$. Next we  define the natural extension of $\phi$ to a function over patterns, which we denote by $\Phi$. By the Curtis-Lyndon-Hedlund Theorem we can find a finite set $T\Subset G$ and a local function $\ell\colon \B^T\to \A$ such that $\phi(x)(g)=\ell((g^{-1}x)|_T)$ for $g\in G$ and $x\in \B^G$. We define $\Phi(q)$ as the pattern with support $\{g\in G : T\subset g^{-1}E\}$ and satisfying $\Phi(q)(g)= \ell((g^{-1}q)|_T)$. It is clear that $\Phi\colon \Le(\B^G)\to\Le(\A^G)$ is a computable function.   

A standard argument using the compactness of $\B^G$ shows that the following two conditions are equivalent. 
\begin{enumerate}
    \item $q\in \Le^c(Y_p)$
    \item There exists  $R\Subset G$ containing $E$ such that ($\forall r\in \{s\in \B^R : s|_E=q\}$)($r\in \Le^c(Y)$  or $\Phi(r)\in \Le^c(X_p)$). 

\end{enumerate}
To finish the argument it suffices to note that (2) is recursively enumerable as a predicate of $p$ and $q$.  This follows from the fact that $\Le^c(Y)$ is recursively enumerable, that $\Le^c(X_p)$ is recursively enumerable uniformly on $p$ (see the proof of Proposition 2.1 in  \cite{aubrun_notion_2017}), and that $\Phi$ is computable. Thus we have proved the statement in the special case of a group with decidable word problem.


Let us provide an alternative and more abstract proof of the statement for a group with decidable word problem, using the language of computable Polish metric spaces. We first argue that $\phi^{-1}(X_p)$ is an effective subshift. We use the following basic facts, which can be found in \cite{barbieri_effective_2025} (see also \cite{barbieri_medvedev_2024,carrasco-vargas_subshifts_2024}).
\begin{itemize}
    \item A fullshift $\mathcal C^G$ can be naturally endowed with a computable metric space structure.
    
    \item A function between two computable metric spaces is computable exactly when the pre-image of every effectively closed set is effectively closed (effectively closed sets are the effective counterpart of closed sets, they do not need to be shift-invariant).
    \item $\phi\colon\B^G\to\A^G$ is a computable function thanks to the Curtis-Lyndon-Hedlund Theorem.
    \item A subshift $Z\subset \mathcal C^G$ is effective if and only if it is an effectively closed subset of $\mathcal C^G$.  
\end{itemize}
Thus we can reason as follows: since $X_p$ is an effective subshift, it is an effectively closed set, hence its computable pre-image $\phi^{-1}(X_p)$ is effectively closed, and therefore it is an effective subshift. But clearly $Y_p=Y\cap \phi^{-1}(X_p)$ and the intersection of two effective subshifts is still effective, so we conclude that $Y_p$ is an effective subshift. Finally, all the intermediate steps depend on $p$ in a computable manner, so we obtain an algorithm which given $q$ and $p$ halts if and only if $q\in \Lep^c(Y_p)$. 
\end{proof}
It is well-known that having decidable language is a stronger property than having zero Medvedev degree. This is a classic result in the context of $\Pi_1^0$ classes. We have not found this statement in the literature in a form that can be directly applied to subshifts over recursively presented groups, so we include a general argument for completeness. 
\begin{proposition}\label{decidable-language-implies-zero-medvedev-degree}
A subshift with decidable language has zero Medvedev degree (on every finitely generated group). 
\end{proposition}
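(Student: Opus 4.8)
The plan is to build a computable configuration in $X$ directly, exploiting the fact that a subshift is a compact, zero-dimensional system and that decidability of the language lets us decide, for every finite support, exactly which patterns extend to a point of $X$. The underlying principle is the classical one for nonempty $\Pi^0_1$ classes: if the tree of ``valid'' finite approximations is decidable and finitely branching, then it has a computable infinite branch, obtained by always selecting the leftmost extension that keeps the tree above it infinite. I would phrase this in the language of pattern presentations so as not to assume a decidable word problem for $G$.

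\medskip

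First I would set up an exhaustion of $G$ by the balls $B_n$ and organize the finite patterns over these balls into a tree. Concretely, call a pattern presentation $p\colon W_n\to\A$ \emph{admissible} if $p\in\Lep(X)$; since $\Le(X)$ is decidable, admissibility of a presentation is a decidable predicate by hypothesis. The admissible presentations, ordered by restriction (a presentation on $W_{n+1}$ lies above its restriction to $W_n$), form a finitely branching tree that is nonempty because $X\neq\varnothing$. The crucial compactness input is that this tree has no dead ends among admissible nodes: every $p\in\Lep_{W_n}(X)$ has at least one admissible extension to $W_{n+1}$, which follows because $p$ being in the language means $p\sqsubset x$ for some $x\in X$, and the restriction of $x$ to $W_{n+1}$ provides the extension.

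\medskip

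Next I would describe the algorithm that produces a computable configuration. I proceed stage by stage: having committed to an admissible presentation $p_n$ on $W_n$, I enumerate the finitely many extensions of $p_n$ to $W_{n+1}$ that are consistent patterns, test each for admissibility using decidability of the language, and select (say, the lexicographically least) admissible one as $p_{n+1}$. Because every admissible node extends to an admissible node, this selection never gets stuck, and the construction yields a nested sequence of admissible presentations whose union defines a configuration $x\in\A^G$ via $x(\underline w)=p_n(w)$ for any $n$ with $w\in W_n$; consistency of each $p_n$ guarantees this is well defined. The resulting $x$ is computable in the required sense: to compute $x(\underline w)$ on input $w\in S^\ast$, run the construction until the stage whose ball contains $\underline w$ and read off the value. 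Finally $x\in X$ because each finite restriction $x|_{B_n}$ lies in $\Le_{B_n}(X)$ and $X$ is closed, so $x$ belongs to the closure of its cylinders, which is contained in $X$; hence $m(X)=0_{\mathfrak M}$.

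\medskip

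The main obstacle I anticipate is not the branch-selection argument itself, which is routine K\"onig's-lemma bookkeeping, but the care needed in handling pattern presentations rather than genuine patterns when $G$ is not assumed to have decidable word problem. Specifically, I must ensure that ``enumerate the extensions of $p_n$'' and ``test admissibility'' are genuinely effective operations on presentations: the set $\Lep(X)$ is decidable by hypothesis, but I should check that the finitely many candidate presentations on $W_{n+1}$ restricting to $p_n$ can be effectively listed (they can, since $W_{n+1}$ is a fixed finite set of words and $\A$ is finite), and that the no-dead-end property survives the passage to presentations. The cleanest route may be to verify these effectivity claims directly for presentations, so that the whole argument goes through without ever referring to $G$'s word problem.
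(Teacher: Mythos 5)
Your proof is correct, and it takes a genuinely different route from the paper's. The paper first replaces $X$ by its pullback $\hat X\subset \A^{F(S)}$ along $\pi\colon F(S)\to G$, using $m(X)=m(\hat X)$, and then invokes a known result for computable metric spaces (Proposition 2.3.2 of Hoyrup's paper) which says that decidability of the relation $[p]\cap \hat X\neq\emptyset$ yields a \emph{dense} set of computable configurations. You instead unfold the underlying classical $\Pi^0_1$-class argument directly over $G$: the presentations $p\colon W_n\to\A$ lying in $\Lep(X)$ form a decidable, finitely branching, nonempty tree without dead ends, and here extendibility coincides with admissibility precisely because the language is decidable, so none of the usual ``leftmost path through the extendible nodes'' subtlety arises and the lexicographically least branch is outright computable; consistency of admissible presentations makes the limiting configuration well defined, and closedness of $X$ puts it in $X$, all without any appeal to the word problem of $G$. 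What your approach buys is a self-contained, elementary proof requiring nothing beyond the paper's definitions (in particular no lift to the free group and no computable-metric-space machinery); what the paper's approach buys is brevity modulo the citation, plus the slightly stronger byproduct that computable configurations are dense in $X$. One streamlining worth making explicit: do not filter candidate extensions by consistency before testing admissibility --- consistency need not be decidable when the word problem is not --- but this costs nothing, since you can enumerate \emph{all} maps $W_{n+1}\to\A$ extending $p_n$ and test membership in $\Lep(X)$ directly, and membership already forces consistency ($p\sqsubset x$ gives $p(u)=x(\underline{u})=x(\underline{v})=p(v)$ whenever $\underline{u}=\underline{v}$). You flag this issue yourself in your final paragraph, and with that fix the argument is complete.
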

\begin{proof}
Let $X\subset\A^G$ be a subshift and let $\hat X\subset \A^{F(S)}$ the subshift defined in the first paragraph of the proof of Lemma \ref{lemma}. Then $m(X)=m(\hat X)$ by definition \cite[\S 3]{barbieri_medvedev_2024}, so it suffices to show that $m(\hat X)=0_{\mathfrak M}$.

Given a pattern $p\colon F\to \A$ with support $F\Subset F(S)$ let us denote by $[p]$ the cylinder set $\{x\in\A^{F(S)} : p\sqsubset x\}$. The hypothesis that $\Lep(X)$ is decidable is equivalent to the statement that given a pattern $p\in\Le(\A^{F(S)})$, the relation $[p]\cap \hat X\ne\emptyset$ is decidable. Then Proposition 2.3.2 in \cite{hoyrup_genericity_2017} proves that $\hat X$ has a dense subset of computable configurations (to apply this result one first endows $\A^{F(S)}$ with the computable metric space structure defined in \cite[\S 3]{barbieri_effective_2025}). It follows that $\hat X$ has at least one computable configuration, and therefore $m(\hat X)=0_{\mathfrak M}$. 
\end{proof}

\begin{proposition}\label{SFT-with-nonzero-degree-has-neighborhood-without-projectively-isolated-subshifts}
    Let $G$ be a recursively presented group and let $\A$ be a finite alphabet. Every SFT in $S(\A^G)$ with nonzero Medvedev degree has a neighborhood without projectively isolated subshifts. 
\end{proposition}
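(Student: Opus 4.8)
The plan is to exhibit an explicit ball around $X$ all of whose members are contained in $X$, and then combine monotonicity of the Medvedev degree with Propositions \ref{prop:projectively-isolated-decidable-language} and \ref{decidable-language-implies-zero-medvedev-degree}, which together say that every projectively isolated subshift on a recursively presented group has zero Medvedev degree. Write $X=X_{\mathcal F}$ for a finite set $\mathcal F$ of forbidden patterns, and fix $n$ large enough that every pattern in $\mathcal F$ has support contained in $B_n$ (we may assume each such support equals $B_n$ by extending patterns arbitrarily). Consider the neighborhood
\[\mathcal N=\{Y\in S(\A^G) : \Le_{B_n}(Y)=\Le_{B_n}(X)\}.\]
Since agreement of languages on $B_k$ forces agreement on $B_n$ whenever $k\geq n$, the set $\mathcal N$ contains the open $D$-ball of radius $2^{-n}$ about each of its points, so it is an open neighborhood of $X$.

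The first key step is to show that $Y\subseteq X$ for every $Y\in\mathcal N$; this is the only place where the finite-type hypothesis is used. Given $y\in Y$ and $g\in G$, the pattern $(g^{-1}y)|_{B_n}$ belongs to $\Le_{B_n}(Y)=\Le_{B_n}(X)$, and since no pattern of $\mathcal F$ occurs in any point of $X$, this $B_n$-pattern avoids $\mathcal F$. As $g$ is arbitrary, no pattern of $\mathcal F$ appears in $y$, whence $y\in X_{\mathcal F}=X$.

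With the containment in hand the rest is immediate. Monotonicity of the Medvedev degree under inclusion gives $m(Y)\geq m(X)$ for every $Y\in\mathcal N$, and since $m(X)\neq 0_{\mathfrak M}$ by hypothesis we conclude $m(Y)\neq 0_{\mathfrak M}$. On the other hand, if some $Y\in\mathcal N$ were projectively isolated, then Proposition \ref{prop:projectively-isolated-decidable-language} would give that $Y$ has decidable language, and Proposition \ref{decidable-language-implies-zero-medvedev-degree} would then force $m(Y)=0_{\mathfrak M}$, a contradiction. Hence $\mathcal N$ contains no projectively isolated subshift, and it is the desired neighborhood.

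I expect the only genuinely substantive point to be the containment step $Y\subseteq X$, that is, the observation that an SFT is locally defined and therefore rigid from below under the language metric $D$; everything else is a direct application of the two preceding propositions together with inclusion-monotonicity of $m$. No new computability input is required, since the hard computability content has already been isolated in Propositions \ref{prop:projectively-isolated-decidable-language} and \ref{decidable-language-implies-zero-medvedev-degree}.
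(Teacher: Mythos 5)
Your proof is correct and takes essentially the same route as the paper: pass to a neighborhood of $X$ consisting of subshifts contained in $X$, use inclusion-monotonicity of Medvedev degrees to get $m(Y)\geq m(X)>0_{\mathfrak M}$ there, and rule out projectively isolated subshifts via Propositions \ref{prop:projectively-isolated-decidable-language} and \ref{decidable-language-implies-zero-medvedev-degree}. The only difference is that where the paper cites the openness of $S(X)=\{Y\in S(\A^G) : Y\subset X\}$ for an SFT $X$ from the literature (Proposition 6.3 of \cite{carrasco-vargas_subshifts_2024} or Lemma 2.3 of \cite{pavlov_structure_2023}), you prove the needed containment $Y\subseteq X$ for $Y$ in your ball $\mathcal N$ directly from the finite-type hypothesis, which is a sound, self-contained substitute for that citation.
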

\begin{proof}
Let $S(X)=\{Y\in S(\A^G) : Y\subset X\}$. The fact that $X$ is SFT implies that $S(X)$ is an open subset of $S(\A^G)$. See Proposition 6.3 in \cite{carrasco-vargas_subshifts_2024} or Lemma 2.3 in \cite{pavlov_structure_2023}. Since Medvedev degrees are non-decreasing by inclusions \cite[Proposition 3.1] {barbieri_medvedev_2024}, every subshift $Y\in S(X)$ satisfies $m(Y)\geq m(X)$, and in particular $m(Y)>0_{\mathfrak M}$. On the other hand, it follows from 
Proposition \ref{prop:projectively-isolated-decidable-language} and Proposition \ref{decidable-language-implies-zero-medvedev-degree} that a projectively isolated subshift has zero Medvedev degree. Thus $S(X)$ contains no projectively isolated subshift. 
\end{proof}
\begin{proof}[Proof of Theorem \ref{main}] It follows from 
\cite[Theorem A]{doucha_strong_2024} that $G$ has the STRP if and only if projectively isolated subshifts are dense on $S(\A^G)$ for every finite alphabet $\A$. If $G$ admits an SFT $X$ with $m(X)>0_{\mathfrak M}$ on alphabet $\A$, then it follows from Proposition \ref{SFT-with-nonzero-degree-has-neighborhood-without-projectively-isolated-subshifts} that $S(\A^G)$ has an open set without projectively isolated subshifts. 
\end{proof}
\section{Further remarks and questions}
The connection between Medvedev degrees and the STRP seems intriguing at first sight. However, there is a rather simple explanation to this fact related to abstract properties of Medvedev degrees: 
\begin{enumerate}
    \item Medvedev degrees are non-increasing by inclusions.
    \item A projectively isolated subshift has minimal Medvedev degree (assuming that the group is recursively presented). 
\end{enumerate} Any function over $S(\A^G)$ with these properties would suffice to prove  Theorem \ref{main}. Indeed, let us state a more precise result.
\begin{proposition}Let $G$ be a finitely generated group and let $\A$ with $|\A|\geq 2$. Let $(\mathcal P,\leq)$ be a partially ordered set with a minimal element $0_{\mathcal P}$, and let $\mathcal I\colon S(\A^G)\to \mathcal P$ be a function with the following two properties. 
\begin{enumerate}
    \item $\mathcal I$ is non-decreasing by inclusions. That is, $X\subset Y\Rightarrow \mathcal I(X)\geq \mathcal I(Y)$ for $X,Y\in S(\A^G)$.
    \item $\mathcal I(X)=0_{\mathcal P}$ for every projectively isolated  $X\in S(\A^G)$.
\end{enumerate} If there exists an SFT $X\in S(\A^G)$ such that $\mathcal I(X)\ne 0_{\mathcal P}$, then $S(X)=\{Y\in S(\A^G) : Y\subset X\}$ is a neighborhood of $X$ without projectively isolated subshifts, and $G$ does not have the STRP.
\end{proposition}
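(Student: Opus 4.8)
The plan is to imitate, almost verbatim, the proof of Proposition \ref{SFT-with-nonzero-degree-has-neighborhood-without-projectively-isolated-subshifts}, simply replacing the Medvedev degree $m$ by the abstract map $\mathcal I$ and the zero degree $0_{\mathfrak M}$ by the minimal element $0_{\mathcal P}$. The two hypotheses imposed on $\mathcal I$ are precisely the two abstract features of the Medvedev degree that were used there, so the argument should transfer with only cosmetic changes. The single genuinely new point is that $0_{\mathcal P}$ is assumed to be \emph{minimal} rather than a global \emph{minimum}, and this is where I would be slightly careful.

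First I would record that $S(X)$ is an open neighborhood of $X$. Since $X\subset X$ we have $X\in S(X)$, and the assumption that $X$ is an SFT guarantees that $S(X)$ is open in $(S(\A^G),D)$: this is exactly the content of Proposition 6.3 in \cite{carrasco-vargas_subshifts_2024} (equivalently Lemma 2.3 in \cite{pavlov_structure_2023}), which involves no computability hypothesis and so applies directly in the present purely order-theoretic setting. The role of the assumption $|\A|\geq 2$ is only to place us in the nontrivial regime in which the density statement of Doucha's theorem is meaningful.

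Next I would show that $\mathcal I(Y)\neq 0_{\mathcal P}$ for every $Y\in S(X)$. Each such $Y$ satisfies $Y\subset X$, so property (1) yields $\mathcal I(Y)\geq \mathcal I(X)$, that is, $\mathcal I(X)\leq \mathcal I(Y)$. Were $\mathcal I(Y)=0_{\mathcal P}$, we would obtain $\mathcal I(X)\leq 0_{\mathcal P}$, and the minimality of $0_{\mathcal P}$ would force $\mathcal I(X)=0_{\mathcal P}$, contradicting the hypothesis $\mathcal I(X)\neq 0_{\mathcal P}$. This is the step that uses minimality (as opposed to a minimum), and it is the only place demanding attention, though it remains elementary. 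Combining this with property (2), which sends every projectively isolated subshift to $0_{\mathcal P}$, I conclude that no $Y\in S(X)$ can be projectively isolated; hence $S(X)$ is a neighborhood of $X$ containing no projectively isolated subshift.

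Finally I would deduce the failure of the STRP. Since $S(X)$ is a nonempty open subset of $S(\A^G)$ avoiding all projectively isolated subshifts, such subshifts are not dense in $S(\A^G)$ for the alphabet $\A$; by \cite[Theorem A]{doucha_strong_2024} the density of projectively isolated subshifts for every finite alphabet characterizes the STRP, so its failure for this one alphabet already gives that $G$ does not have the STRP. I expect no serious obstacle here: the whole argument is a direct abstraction of the concrete proof already given, with the minimality argument above being the only subtlety.
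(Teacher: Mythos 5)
Your proposal is correct and follows exactly the route the paper intends: its proof of this proposition is a one-line pointer to Theorem A of \cite{doucha_strong_2024} and ``the same reasoning'' as in Proposition \ref{SFT-with-nonzero-degree-has-neighborhood-without-projectively-isolated-subshifts}, which is precisely what you spell out (openness of $S(X)$ via the SFT hypothesis, monotonicity forcing $\mathcal I(Y)\neq 0_{\mathcal P}$ on $S(X)$, hence no projectively isolated subshifts there, hence failure of density and of the STRP). Your explicit handling of the minimal-versus-minimum point is a careful touch the paper leaves implicit, but it does not change the argument.
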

\begin{proof}
    This follows from \cite[Theorem A]{doucha_strong_2024} and the same reasoning used in Proposition  \ref{SFT-with-nonzero-degree-has-neighborhood-without-projectively-isolated-subshifts}.
\end{proof}
This observation explains why Medvedev degrees have proven to be an useful tool in relation to the STRP. At least for amenable groups, topological entropy and other related popular invariants have the opposite behavior under inclusions. 

It is natural to ask  whether the techniques used in this work can be applied beyond recursively presented groups. As a test case we propose the following question.
\begin{question}
Is it true that every direct product of two finitely generated infinite groups fails to have the STRP? 
\end{question}A natural direction would be to define a relativized version of Medvedev degrees in which the word problem of the group is used as oracle, and then replicate the proof of \cite[Theorem 5.6]{barbieri_medvedev_2024} to obtain SFTs  whose Medvedev degree relative to the word problem of the group is nontrivial.

\section*{Acknowledgements} I am grateful to Michal Doucha for a fruitful correspondence exchange which derived in a proof of Theorem \ref{main}, and to Sebasti\'an Barbieri for kindly proofreading an initial version of this work. The author was supported by a grant from the Priority Research Area SciMat under the Strategic Programme Excellence Initiative at Jagiellonian University.
\bibliographystyle{abbrv}
\bibliography{references}
\end{document}